\documentclass[reqno]{amsart}

\usepackage{amsmath}
\usepackage{amsfonts}
\usepackage{amssymb}
\usepackage{pb-diagram}
\usepackage{graphicx}
\usepackage[all]{xy}%

\setcounter{MaxMatrixCols}{14}

\newtheorem{theorem}{Theorem}[section]
\theoremstyle{plain}

\newtheorem{definition}{Definition}
\newtheorem{example}{Example}

\newtheorem{lemma}{Lemma}[section]

\newtheorem{remark}{Remark}

\numberwithin{equation}{section}

\begin{document}
\title[Holomorphic Riemannian maps]{Holomorphic Riemannian maps}
\author{Bayram \c{S}ahin}
\subjclass[2010]{53C15, 53C43}
\keywords{Riemannian submersion, Holomorphic submersion, holomorphic submanifold, holomorphic Riemannian map}

\begin{abstract}We introduce holomorphic
Riemannian maps between almost Hermitian manifolds as a generalization of holomorphic submanifolds and holomorphic submersions, give examples and obtain a geometric characterization
of harmonic holomorphic Riemannian maps from almost Hermitian
manifolds to  K\"{a}hler manifolds.
\end{abstract}

\maketitle

\section{Introduction}

\setcounter{equation}{0}
\renewcommand{\theequation}{1.\arabic{equation}}

In 1992, Fischer  introduced Riemannian maps between Riemannian
manifolds in \cite{Fischer} as a generalization of the notions of
isometric immersions and Riemannian submersions, for Riemannian submersions, \cite{Gray} and \cite{O'Neill}, see also \cite{Falcitelli} and \cite{Yano-Kon}. Let $F:(M_1,
g_1)\longrightarrow (M_2, g_2)$  be a smooth map between Riemannian
manifolds such that $0<rank F<min\{ m, n\}$, where $dimM_1=m$ and
$dimM_2=n$. Then we denote the kernel space of $F_*$ by $kerF_*$ and
consider the orthogonal complementary space $\mathcal{H}=(ker
F_*)^\perp$ to $kerF_*$. Then the tangent bundle of $M_1$ has the
following decomposition

$$TM_1=kerF_* \oplus\mathcal{H}.$$

We denote the range of $F_*$ by $rangeF_*$ and consider the
orthogonal complementary space $(range F_*)^\perp$ to $rangeF_*$ in
the tangent bundle $TM_2$ of $M_2$. Since $rankF<min\{ m, n\}$, we
always have $(range F_*)^\perp$. Thus the tangent bundle $TM_2$ of
$M_2$ has the following decomposition
$$TM_2=(rangeF_*)\oplus (rangeF_*)^\perp.$$
 Now, a smooth
map $F:(M^{^m}_1,g_1)\longrightarrow (M^{^n}_2, g_2)$ is called
Riemannian map at $p_1 \in M$ if the horizontal restriction
$F^{^h}_{*p_1}: (ker F_{*p_1})^\perp \longrightarrow (range
F_{*p_1})$  is a linear isometry between the inner product spaces
$((ker F_{*p_1})^\perp, g_1(p_1)\mid_{(ker F_{*p_1})^\perp})$ and
$(range F_{*p_1}, g_2(p_2)\mid_{(range F_{*p_1})})$, $p_2=F(p_1)$.
Therefore Fischer stated in \cite{Fischer} that a Riemannian map is
a map which is as isometric as it can be. In another words, $F_*$
satisfies the equation
\begin{equation}
g_2(F_*X, F_*Y)=g_1(X, Y)\label{eq:1.1}
\end{equation}
 for $X, Y$ vector fields
tangent to $\mathcal{H}$. It follows that isometric immersions and
Riemannian submersions are particular Riemannian maps with
$kerF_*=\{ 0 \}$ and $(range F_*)^\perp=\{ 0 \}$. It is known that a
Riemannian map is a subimmersion \cite{Fischer}.\\

In this paper, as a generalization of holomorphic submersions and holomorphic submanifolds, we
introduce holomorphic Riemannian maps and investigate the harmonicity of such maps.

\section{The Gauss equation for Riemannian maps}
\setcounter{equation}{0}
\renewcommand{\theequation}{2.\arabic{equation}}
Let ($\bar{M}, g$) be an almost Hermitian manifold. This
means \cite{Yano-Kon} that $\bar{M}$ admits a tensor field $J$ of
type (1, 1) on $\bar{M}$ such that, $\forall X, Y \in
\Gamma(T\bar{M})$, we have
\begin{equation}
J^2=-I, \quad g(X, Y)=g(JX, JY) \label{eq:2.12}.
\end{equation}
 An almost Hermitian manifold $\bar{M}$ is called  K\"{a}hler manifold if
\begin{equation}
(\bar{\nabla}_XJ)Y=0,\forall X,Y \in \Gamma(T\bar{M}),
\label{eq:2.13}
\end{equation}
where $\bar{\nabla}$ is the Levi-Civita connection on $\bar{M}. $

Let $(M, g_{_M})$ and $(N, g_{_N})$ be Riemannian manifolds and
suppose that $\varphi: M\longrightarrow N$ is a smooth mapping
between them. Then the differential ${\varphi}_*$ of $\varphi$ can
be viewed a section of the bundle $Hom(TM,
\varphi^{-1}TN)\longrightarrow M,$ where $\varphi^{-1}TN$ is the
pullback bundle which has fibres
$(\varphi^{-1}TN)_p=T_{\varphi(p)} N, p \in M.$ $Hom(TM,
\varphi^{-1}TN)$ has a connection $\nabla$ induced from the
Levi-Civita connection $\nabla^M$ and the pullback connection $\nabla^{\varphi}$.
Then the second fundamental form of $\varphi$ is given by
\begin{equation}
(\nabla {\varphi}_*)(X, Y)=\nabla^{\varphi}_X
{\varphi}_*(Y)-{\varphi}_*(\nabla^M_X Y) \label{eq:2.10}
\end{equation}
for $X, Y \in \Gamma(TM).$ It is known that the second fundamental
form is symmetric. A smooth map $\varphi: (M, g_{_M})
\longrightarrow (N, g_{_N})$ is said to be harmonic if $trace
(\nabla {\varphi}_*)=0.$ On the other hand, the tension field of
$\varphi$ is the section $\tau(\varphi)$ of
$\Gamma(\varphi^{-1}TN)$ defined by
\begin{equation}
\tau(\varphi)=div{\varphi}_*=\sum^{m}_{i=1} (\nabla
{\varphi}_*)(e_i, e_i), \label{eq:2.11}
\end{equation}
where $\{e_1,...,e_m\}$ is the orthonormal frame on $M$. Then it
follows that $\varphi$ is harmonic if and only if
$\tau(\varphi)=0.$\\

For a Riemannian map, we have the following.

\begin{lemma}\cite{Sahin}Let $F$ be a Riemannian map from a
Riemannian manifold $(M_1,g_1)$ to a Riemannian manifold
$(M_2,g_2)$. Then
\begin{equation}
g_2((\nabla F_*)(X,Y),F_*(Z))=0, \forall X,Y,Z \in \Gamma((ker
F_*)^\perp). \label{eq:3.1}
\end{equation}
\end{lemma}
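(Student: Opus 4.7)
The idea is to extract the desired orthogonality by differentiating the isometry relation \eqref{eq:1.1} along horizontal directions, then exploiting the symmetry of the second fundamental form $(\nabla F_*)$ to show the resulting quantity is both symmetric and skew-symmetric in a suitable sense.

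First I would extend the tangent vectors to horizontal vector fields $X,Y,Z \in \Gamma((\ker F_*)^\perp)$ and observe that, since $F$ is a Riemannian map, the function $g_2(F_*Y,F_*Z)$ on $M_1$ equals $g_1(Y,Z)$. Differentiating this identity in the direction $X$, the left-hand side unfolds via the pullback connection $\nabla^F$ and the right-hand side via $\nabla^{M_1}$ compatible with $g_1$. Substituting the defining formula \eqref{eq:2.10} $\nabla^F_X F_*Y=(\nabla F_*)(X,Y)+F_*(\nabla^{M_1}_X Y)$ into both pieces on the left, and noting that $F_*$ annihilates the vertical part of $\nabla^{M_1}_X Y$ while the Riemannian map property recovers the $g_1$-inner product on the horizontal part (so that $g_2(F_*(\nabla^{M_1}_X Y),F_*Z)=g_1(\nabla^{M_1}_X Y,Z)$ whenever $Z$ is horizontal), the metric-compatibility terms cancel on both sides. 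What remains is the clean skew relation
\begin{equation*}
g_2((\nabla F_*)(X,Y),F_*Z)=-g_2(F_*Y,(\nabla F_*)(X,Z)).
\end{equation*}

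The finish is the standard three-variable permutation trick. Using the symmetry $(\nabla F_*)(X,Y)=(\nabla F_*)(Y,X)$ and applying the above relation three times with the roles of $X,Y,Z$ cyclically permuted, one shows that the quantity $A(X,Y,Z):=g_2((\nabla F_*)(X,Y),F_*Z)$ is simultaneously symmetric in $(X,Y)$ and skew in $(Y,Z)$, which forces $A(X,Y,Z)=-A(X,Y,Z)$, hence $A\equiv 0$ on horizontal vector fields. Since the value of $(\nabla F_*)(X,Y)$ at a point depends only on the pointwise values of $X$ and $Y$, and any horizontal tangent vector extends to a horizontal field, this yields the pointwise statement \eqref{eq:3.1}.

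The main subtlety I expect is bookkeeping: one must be careful that $\nabla^{M_1}_X Y$ need not be horizontal (so the Riemannian map identity cannot be applied to it directly), and handle this via the $F_*$-kernel containing the vertical distribution. Once that observation is in place, the proof is essentially the same mechanism that produces the Koszul-type identity $\nabla g=0$ from metric compatibility.
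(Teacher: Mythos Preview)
Your argument is correct and is the standard one for this kind of statement. Note, however, that the paper does not actually supply a proof of this lemma: it is quoted from \cite{Sahin}, so there is no in-paper proof to compare against. Your differentiation of the isometry identity \eqref{eq:1.1}, combined with the observation that $g_2(F_*(\nabla^{M_1}_X Y),F_*Z)=g_1(\nabla^{M_1}_X Y,Z)$ for horizontal $Z$ (since $F_*$ kills the vertical component and the horizontal restriction is isometric), cleanly yields the skew relation $A(X,Y,Z)=-A(X,Z,Y)$; together with the symmetry $A(X,Y,Z)=A(Y,X,Z)$ from the symmetry of $(\nabla F_*)$, the six-term cyclic chain forces $A\equiv 0$. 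This is exactly the mechanism used classically to show that the second fundamental form of an isometric immersion takes values in the normal bundle, and it is almost certainly the proof given in \cite{Sahin} as well.
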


From now on, for simplicity, we denote by $\nabla^2$ both the
Levi-Civita connection of $(M_2, g_2)$ and its pullback along $F$.
 Then according to \cite{Nore}, for any vector field $X$ on $M_1$ and any section $V$ of $(range F_*)^\perp$, where $(range F_*)^\perp$ is the
 subbundle of $F^{-1}(TM_2)$ with fiber $(F_*(T_pM))^\perp$-orthogonal complement of $F_*(T_pM)$ for $g_2$ over $p$, we have
 $\nabla^{^{F \perp}}_XV$ which is the orthogonal projection of $\nabla^2_XV$ on $(F_*(TM))^\perp$. In \cite{Nore}, the author also showed that $\nabla^{^{F \perp}}$ is a linear connection on $(F_*(TM))^\perp$ such that $\nabla^{^{F \perp}}g_2=0$. We now  define $\mathcal{S}_{V}$ as
\begin{equation}
\nabla^2_{_{F_*X}}V=-\mathcal{S}_{_V}F_*X+\nabla^{^{F
\perp}}_{_{X}}V, \label{eq:3.5}
\end{equation}
where $\mathcal{S}_{_V}F_*X$ is the tangential component (a vector
field along $F$) of $\nabla^2_{_{F_*X}}V$. It is easy to see that
$\mathcal{S}_V F_*X$ is bilinear in $V$ and $F_*X$ and
$\mathcal{S}_V F_*X$ at $p$ depends only on $V_p$ and $F_{*p}X_p$.
By direct computations, we obtain
\begin{equation}
g_2(\mathcal{S}_{_V} F_*X,F_*Y)=g_2(V, (\nabla F_*)(X,Y)),
\label{eq:3.6}
\end{equation}
for $X, Y \in \Gamma((ker F_*)^\perp)$ and $V \in \Gamma((range
F_*)^\perp)$. Since $(\nabla F_*)$ is symmetric, it follows that
$\mathcal{S}_{_V}$ is a symmetric linear transformation of $range F_*$.\\

By using (\ref{eq:3.5}) and (\ref{eq:2.10}) we obtain  the following equation which will be called the Gauss equation for a Riemannian map between
Riemannian manifolds.\\

\begin{lemma} Let $F:(M_1,g_1) \longrightarrow
(M_2,g_2)$ be a Riemannian map from Riemannian manifold $M_1$ to a
Riemannian manifold $M_2$. Then we have
\begin{eqnarray}
g_2(R^2(F_*X,F_*Y)F_*Z,F_*T)&=&g_1(R^1(X,Y)Z,T)+g_2((\nabla
F_*)(X,Z),(\nabla F_*)(Y,T))\nonumber\\
&-&g_2((\nabla F_*)(Y,Z),(\nabla F_*)(X,T)) \label{eq:3.7}
\end{eqnarray}
for $X, Y, Z, T \in \Gamma((ker F_*)^\perp)$, where $R^1$ and $R^2$ denote curvature tensors of $\nabla^1$ and $\nabla^2$ which are metric connections on $M_1$ and $M_2$, respectively.
\end{lemma}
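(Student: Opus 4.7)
My plan is to mirror the classical derivation of Gauss's equation for isometric immersions, with the decomposition~(2.10) playing the role of the Gauss formula and~(3.5) playing the role of the Weingarten formula. The starting point is the rewriting $\nabla^2_{F_*Y} F_*Z = F_*(\nabla^1_Y Z) + (\nabla F_*)(Y,Z)$, which is just~(2.10). For $Y,Z \in \Gamma((\ker F_*)^\perp)$, the previous lemma (equation~(3.1)) tells us that $(\nabla F_*)(Y,Z)$ lies in $\Gamma((\mathrm{range}\,F_*)^\perp)$, so the two summands live in complementary subbundles.

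Next I apply $\nabla^2_{F_*X}$ to both sides. On the tangential summand I reapply~(2.10), producing $F_*(\nabla^1_X \nabla^1_Y Z)+(\nabla F_*)(X,\nabla^1_Y Z)$. On the normal summand I apply~(3.5), producing $-\mathcal{S}_{(\nabla F_*)(Y,Z)} F_*X + \nabla^{F\perp}_X (\nabla F_*)(Y,Z)$. Performing the symmetric computation with $X$ and $Y$ interchanged, and expanding $\nabla^F_{[X,Y]} F_*Z$ by~(2.10), I assemble $R^2(F_*X,F_*Y)F_*Z$ as a sum of $F_*(R^1(X,Y)Z)$, cross-terms involving $(\nabla F_*)$ with non-horizontal arguments, the two shape-operator terms, and normal-connection terms.

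Finally I take the $g_2$-inner product with $F_*T$ for $T \in \Gamma((\ker F_*)^\perp)$. The term $g_2(F_*(R^1(X,Y)Z),F_*T)$ becomes $g_1(R^1(X,Y)Z,T)$ by the Riemannian map property applied to the horizontal vector $T$. The $\nabla^{F\perp}$ pieces pair trivially with $F_*T \in \mathrm{range}\,F_*$ since they lie in the orthogonal complement. The two $\mathcal{S}$-pieces are converted by~(3.6) into exactly $g_2((\nabla F_*)(X,Z),(\nabla F_*)(Y,T))-g_2((\nabla F_*)(Y,Z),(\nabla F_*)(X,T))$, which is the claimed combination.

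The main bookkeeping hurdle is handling the cross-terms $(\nabla F_*)(X,\nabla^1_Y Z)$, $(\nabla F_*)(Y,\nabla^1_X Z)$, and $(\nabla F_*)([X,Y],Z)$: since $\nabla^1_Y Z$ and $[X,Y]$ need not be horizontal, Lemma~(3.1) does not apply directly. I would split each argument into horizontal and vertical parts; by~(3.1) the horizontal pieces pair trivially with $F_*T$, while for the vertical pieces one uses $F_*V=0$ for $V \in \ker F_*$ together with tensoriality of $(\nabla F_*)$ and the Riemannian map identity $g_2(F_*A,F_*T)=g_1(A,T)$ valid for horizontal $T$, so that the surviving contributions assemble into the $g_1(R^1(X,Y)Z,T)$ already identified on the right-hand side.
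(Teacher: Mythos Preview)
Your strategy matches the paper's one-line indication (``by using (2.10) and (3.5)''), and your first three paragraphs are a correct unpacking of it. The gap is in the last paragraph. Note that $g_2(F_*(R^1(X,Y)Z),F_*T)$ already equals the full quantity $g_1(R^1(X,Y)Z,T)$ on its own: split $R^1(X,Y)Z$ into horizontal and vertical parts, observe that the vertical part is annihilated by $F_*$, and apply~(1.1) to the horizontal part together with the horizontal vector $T$. Hence the cross-terms cannot ``assemble into'' that same quantity a second time.

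If you actually carry out the horizontal/vertical splitting you propose on $(\nabla F_*)(X,\nabla^1_YZ)$, $(\nabla F_*)(Y,\nabla^1_XZ)$, and $(\nabla F_*)([X,Y],Z)$, the surviving contribution after pairing with $F_*T$ is
\[
g_1(A_YZ,A_XT)\;-\;g_1(A_XZ,A_YT)\;-\;2\,g_1(A_XY,A_ZT),
\]
where $A_XY=\mathcal{V}\nabla^1_XY$ is O'Neill's integrability tensor of $(\ker F_*)^\perp$. These terms do not vanish in general: specialising to a Riemannian submersion (so that $(\nabla F_*)$ vanishes on horizontal pairs by~(3.1)) they reproduce exactly the classical O'Neill correction in the formula relating the curvatures of base and total space. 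Thus the identity as stated needs either the extra hypothesis that $(\ker F_*)^\perp$ is integrable (so $A\equiv 0$, and then your argument goes through cleanly), or else the $A$-terms must be added to the right-hand side. Either way, the claim that the vertical pieces are already accounted for is where your plan breaks down.
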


\section{Holomorphic Riemannian maps}
\setcounter{equation}{0}
\renewcommand{\theequation}{3.\arabic{equation}}

In this section, we define holomorphic Riemannian maps and obtain a geometric
characterization of harmonic holomorphic Riemannian maps from a
K\"{a}hler manifold to an almost Hermitian manifold.\\

\begin{definition} Let $F$ be a Riemannian map from an
almost Hermitian manifold $(M_1,g_1,J_1)$ to an almost Hermitian
manifold $(M_2,g_2,J_2)$. Then we say that $F$ is a holomorphic
Riemannian map at $p\in M_1$ if
\begin{equation}
J_2F_*=F_*J_1.\label{eq:4.1}
\end{equation}
If $F$ is a holomorphic Riemannian map at every point $p \in M_1$
then we say that $F$ is a holomorphic Riemannian map between $M_1$
and $M_2$.
\end{definition}

It is known that vertical and horizontal distributions of an almost
Hermitian submersion are invariant with respect to the complex
structure of the total manifold. Next, we show that this is true for
a holomorphic Riemannian map.\\

\begin{lemma}Let $F$ be a holomorphic Riemannian
map between almost Hermitian manifolds $(M_1,g_1,J_1)$ and
$(M_2,g_2,J_2)$. Then the distributions $ker F_*$ and $(ker
F_*)^\perp$ are invariant with respect to $J_1$.
\end{lemma}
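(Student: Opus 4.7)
The plan is to handle the two distributions separately, tackling $\ker F_*$ first since invariance of its orthogonal complement will follow from it almost for free.

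For the vertical distribution, I would take $V \in \Gamma(\ker F_*)$ and simply apply $F_*$ to $J_1 V$. Using the defining relation \eqref{eq:4.1} we get $F_*(J_1 V) = J_2 F_*(V) = J_2(0) = 0$, so $J_1 V \in \ker F_*$. This step is essentially tautological and just unwinds the holomorphicity condition; I do not expect any difficulty here.

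For the horizontal distribution $(\ker F_*)^\perp$, I would switch to testing orthogonality against arbitrary $V \in \Gamma(\ker F_*)$. Given $X \in \Gamma((\ker F_*)^\perp)$, I want $g_1(J_1 X, V) = 0$. The key tool is the almost Hermitian compatibility \eqref{eq:2.12}, which together with $J_1^2 = -I$ gives $g_1(J_1 X, V) = g_1(J_1^2 X, J_1 V) = -g_1(X, J_1 V)$. By the first step, $J_1 V \in \Gamma(\ker F_*)$, and since $X$ is horizontal this inner product vanishes. Hence $J_1 X$ is orthogonal to every vertical vector, so $J_1 X \in (\ker F_*)^\perp$.

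The whole argument is short and I do not anticipate a real obstacle; the only point to be careful about is the logical order, since the invariance of $(\ker F_*)^\perp$ genuinely uses the invariance of $\ker F_*$ (via the fact that $J_1 V$ lies in $\ker F_*$ when $V$ does). Once the two steps are laid out in this order, the proof reduces to one line apiece.
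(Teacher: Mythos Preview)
Your proposal is correct and matches the paper's proof: the paper handles $\ker F_*$ by the same one-line computation $F_*(J_1X)=J_2F_*(X)=0$, and for $(\ker F_*)^\perp$ merely says ``in a similar way,'' which your orthogonality argument via \eqref{eq:2.12} spells out explicitly. There is no gap and no substantive difference in approach.
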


\begin{proof}For $X \in \Gamma(ker F_*)$, from
(\ref{eq:4.1}) we have  $F_*(J_1X)=J_2F_*(X)=0$ which implies that
$J_1X\in \Gamma(ker F_*)$. In a similar way, one shows that $(ker F_*)^\perp$ is invariant.
\end{proof}

In a similar way, it is easy to see that $(range F_*)^\perp$ is
invariant under the action of $J_2$. We now give examples of holomorphic Riemannian maps.\\

\begin{example}Every holomorphic submersion between
almost Hermitian manifolds is a holomorphic Riemannian map with
$(range F_*)^\perp=\{0\}$, For holomorphic (almost Hermitian) submersions, see; \cite{Falcitelli}, \cite{Watson}.
\end{example}

\begin{example} Every K\"{a}hlerian submanifold of a
K\"{a}hler manifold is a holomorphic Riemannian map with $ker
F_*=\{0\}$. For K\"{a}hlerian submanifolds, see; \cite{Yano-Kon}.
\end{example}
In the following $R^{2m}$ denotes the Euclidean $2m-$ space with the
standart metric. An almost complex structure $J$ on $R^{2m}$ is said
to be compatible if $(R^{2m},J)$ is complex analytically isometric
to the complex number space $C^m$ with the standart flat
K\"{a}hlerian metric. We denote by $J$ the compatible almost complex
structure on $R^{2m}$ defined by
$$J(a^1,...,a^{2m})=(-a^2,a^1,...,-a^{2m},a^{2m-1}).$$
\begin{example}Consider the following Riemannian map given by
$$
\begin{array}{cccc}
  F: & R^4             & \longrightarrow & R^4\\
     & (x_1,x_2,x_3,x_4) &             & (\frac{x_1 + x_3}{\sqrt{2}},
     \frac{x_2+x_4}{\sqrt{2}},0,0).
\end{array}
$$
Then $F$ is a holomorphic Riemannian map.
\end{example}

\begin{remark}We note that the notion of invariant
Riemannian map has been introduced in \cite{Sahin} as a
generalization of invariant immersion of almost Hermitian manifolds
and holomorphic Riemannian submersions. One can see that every
holomorphic Riemannian map is an invariant Riemannian map, but the
converse is not true. In other words, an invariant Riemannian map
may not be a holomorphic Riemannian map.
\end{remark}

Since $F$ is a subimmersion, it follows that the rank of $F$ is constant on $M_1$,
then the rank theorem for functions implies that $kerF_*$ is an integrable subbundle
of $TM_1$, (\cite{Marsden}, page:205). We now investigate the harmonicity of holomorphic Riemannian maps. We
first note that if $M_1$ and $M_2$ are K\"{a}hler manifolds and $F : M_1\longrightarrow M_2$ is a
holomorphic map then $F$ is harmonic \cite{Wood-Baird}. But there is no guarantee when $M_1$
or $M_2 $ is an almost Hermitian manifold.\\

\begin{theorem}Let $F$ be a holomorphic Riemannian
map from a K\"{a}hler manifold $(M_1,g_,J_1)$ to almost Hermitian
manifold $(M_2,g_2,J_2)$. Then $F$ is harmonic if and only if the
distribution $F_*((ker F_*)^\perp)$ is minimal.
\end{theorem}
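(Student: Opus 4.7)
The plan is to decompose the tension field
$$\tau(F)=\sum_{i=1}^{m}(\nabla F_*)(e_i,e_i)$$
along the orthogonal sum $TM_1=\ker F_*\oplus(\ker F_*)^\perp$, and then to show that the K\"{a}hler hypothesis on $M_1$ forces the vertical part of this trace to vanish automatically. Because Lemma~3.1 guarantees that both $\ker F_*$ and $(\ker F_*)^\perp$ are $J_1$-invariant, I would choose a local adapted orthonormal frame
$$\{V_1,J_1V_1,\ldots,V_n,J_1V_n,\ e_1,\ldots,e_{2k}\}$$
whose first $2n$ vectors span $\ker F_*$ and whose last $2k$ span $(\ker F_*)^\perp$. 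Since $F_*V=0$ for every $V\in\Gamma(\ker F_*)$, formula (\ref{eq:2.10}) collapses to $(\nabla F_*)(V,V)=-F_*(\nabla^{M_1}_VV)$, and the tension field splits as
$$\tau(F)=-F_*\!\left(\sum_{a=1}^{n}\bigl(\nabla^{M_1}_{V_a}V_a+\nabla^{M_1}_{J_1V_a}J_1V_a\bigr)\right)+\sum_{i=1}^{2k}(\nabla F_*)(e_i,e_i).$$

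By Lemma~2.1, each term $(\nabla F_*)(e_i,e_i)$ lies in $(\mathrm{range}\,F_*)^\perp$, so the second sum is exactly the mean-curvature-type trace of the subbundle $F_*((\ker F_*)^\perp)$ of $F^{-1}TM_2$; its vanishing is by definition the minimality condition on $F_*((\ker F_*)^\perp)$. The theorem therefore reduces to showing that the first, vertical, sum is identically zero.

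This is the only place the K\"{a}hler assumption on $M_1$ is used. Since $F$ is a subimmersion the distribution $\ker F_*$ is integrable, and by Lemma~3.1 each of its leaves is a $J_1$-invariant, hence complex, submanifold of the K\"{a}hler manifold $(M_1,g_1,J_1)$. For such a leaf the standard argument---compare the tangential and normal components of the ambient identity $\nabla^{M_1}_XJ_1Y=J_1\nabla^{M_1}_XY$ for $X,Y$ tangent to the leaf to obtain $h(J_1X,J_1Y)=-h(X,Y)$ for its second fundamental form $h$---gives $h(V_a,V_a)+h(J_1V_a,J_1V_a)=0$ for each $a$, i.e.\ the mean curvature of every leaf of $\ker F_*$ vanishes. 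Because $F_*$ annihilates the vertical (leaf-tangent) component of $\nabla^{M_1}_{V_a}V_a$ and $\nabla^{M_1}_{J_1V_a}J_1V_a$ and sees only the horizontal one, this yields $F_*\bigl(\sum_a(\nabla^{M_1}_{V_a}V_a+\nabla^{M_1}_{J_1V_a}J_1V_a)\bigr)=0$, establishing the equivalence. I do not expect a serious technical obstacle: the whole argument hinges on recognising the leaves of $\ker F_*$ as complex submanifolds of a K\"{a}hler manifold and invoking their classical minimality, while the $J_1$-paired frame is what makes the pairing transparent.
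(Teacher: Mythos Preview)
Your proposal is correct and follows essentially the same route as the paper: decompose $\tau(F)$ along $\ker F_*\oplus(\ker F_*)^\perp$, use Lemma~2.1 to identify the horizontal contribution with the mean curvature of $\mathrm{range}\,F_*$, and use the K\"{a}hler condition together with $J_1$-invariance and integrability of $\ker F_*$ to kill the vertical contribution. The only cosmetic difference is that you package the vertical argument as ``complex submanifolds of a K\"{a}hler manifold are minimal'' via $h(J_1V,J_1V)=-h(V,V)$, whereas the paper computes $\tau^1$ twice, once with the frame $\{e_i\}$ and once with $\{J_1e_i\}$, and obtains $\tau^1=-\tau^1$; unwinding that computation gives exactly your identity.
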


\begin{proof} Since $TM_1=ker F_*\oplus (ker F_*)^\perp$,
we can write $\tau=\tau^1+\tau^2$, where $\tau^1$ and $\tau^2$ are
the parts of $\tau$ in $ker F_*$ and $(ker F_*)^\perp$,
respectively. First we compute $\tau^1=\sum^{n_1}_{i=1}(\nabla
F_*)(e_i,e_i)$, where $\{e_1,...,e_{n_1}\}$ is a basis of $ker F_*$.
From (\ref{eq:2.10}), we have
\begin{equation}
\tau^1=-\sum^{n_1}_{i=1}F_*(\nabla^1_{e_i}e_i).\label{eq:4.2}
\end{equation}
We note that, since $(ker F_*)$ is an invariant space with respect
to $J_1$, then $\{J_1e_i\}^{n_1}_{i=1}$ is also basis of $ker F_*$.
Thus we can write $$\tau^1=\sum^{n_1}_{i=1}(\nabla
F_*)(J_1e_i,J_1e_i)=-\sum^{n_1}_{i=1} F_*(\nabla^1_{J_1e_i}J_1e_i).$$
Since $M_1$ is a K\"{a}hler manifold and $ker F_*$ is integrable, using (\ref{eq:4.1}), we
obtain
$$\tau^1=-\sum^{n_1}_{i=1}J_2F_*(\nabla^1_{e_i}J_1e_i).$$
Using again (\ref{eq:4.1}), we derive
\begin{equation}
\tau^1=\sum^{n_1}_{i=1}F_*(\nabla^1_{e_i}e_i).\label{eq:4.3}
\end{equation} Thus (\ref{eq:4.2}) and (\ref{eq:4.3}) imply that
$\tau^1=0$. On the other hand, using Lemma~2.1 and (\ref{eq:2.10}) we obtain
$$
\tau^2=g_2(\sum^{m_2}_{s=1}\sum^{n_1}_{a=1}(\nabla^F_{e_a}F_*(e_a),\mu_s)\mu_s=H_{(range F_*)},
$$
where $H_{(range F_*)}$ is the mean curvature vector field of
$(range F_*)$. Then our assertion follows from above equation and
(\ref{eq:4.3}).
\end{proof}

Next, by using (\ref{eq:2.12}) and (\ref{eq:2.13}) we have the
following.

\begin{lemma} Let $F$ be a holomorphic Riemannian
map from an almost Hermitian  manifold $(M_1,g_1,J_1)$ to a
K\"{a}hler manifold $(M_2,g_2,J_2)$. Then we have
\begin{equation}
(\nabla F_*)(X,J_1Y)=(\nabla F_*)( Y,J_1X)=J_2(\nabla F_*)(X,Y),
\label{eq:4.4}
\end{equation}
for $X,Y \in \Gamma((ker F_*)^\perp)$.
\end{lemma}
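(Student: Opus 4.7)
My plan is to expand both $(\nabla F_*)(X,J_1Y)$ and $J_2(\nabla F_*)(X,Y)$ via the definition (\ref{eq:2.10}), use the intertwining (\ref{eq:4.1}) together with the K\"{a}hler condition (\ref{eq:2.13}) on $M_2$ to reduce their difference to a single term lying in $\operatorname{range} F_*$, and then invoke Lemma~2.1 to argue separately that each side already lies in $(\operatorname{range} F_*)^{\perp}$. Being simultaneously in two orthogonal subbundles will then force the difference to vanish.

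Concretely, I would first write $(\nabla F_*)(X,J_1Y)=\nabla^{\varphi}_X F_*(J_1Y)-F_*(\nabla^1_X J_1Y)$. Substituting $F_*(J_1Y)=J_2F_*Y$ from (\ref{eq:4.1}) and then using $\nabla^2 J_2=0$ on $M_2$, which passes to the pullback connection along $F$, lets me pull $J_2$ outside to obtain $J_2\nabla^{\varphi}_X F_*Y-F_*(\nabla^1_X J_1Y)$. Computing $J_2(\nabla F_*)(X,Y)$ in the same way and again invoking (\ref{eq:4.1}) to commute $J_2$ past $F_*$ produces $J_2\nabla^{\varphi}_X F_*Y-F_*(J_1\nabla^1_X Y)$. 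Subtracting yields
\[
(\nabla F_*)(X,J_1Y)-J_2(\nabla F_*)(X,Y)=-F_*((\nabla^1_X J_1)Y),
\]
so the discrepancy lies in $\operatorname{range} F_*$.

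Next I would argue that each of $(\nabla F_*)(X,J_1Y)$ and $J_2(\nabla F_*)(X,Y)$ lies in $(\operatorname{range} F_*)^{\perp}$. By Lemma~3.1, $J_1Y\in\Gamma((\ker F_*)^{\perp})$, so Lemma~2.1 places $(\nabla F_*)(X,J_1Y)$ in the normal subbundle; and $J_2(\nabla F_*)(X,Y)$ sits there by Lemma~2.1 together with the $J_2$-invariance of $(\operatorname{range} F_*)^{\perp}$ noted after Lemma~3.1. Hence the difference lies in $\operatorname{range} F_*\cap(\operatorname{range} F_*)^{\perp}=\{0\}$, which establishes $(\nabla F_*)(X,J_1Y)=J_2(\nabla F_*)(X,Y)$. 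The remaining equality then follows from the chain $(\nabla F_*)(X,J_1Y)=J_2(\nabla F_*)(X,Y)=J_2(\nabla F_*)(Y,X)=(\nabla F_*)(Y,J_1X)$, using symmetry of $\nabla F_*$ and the identity just proved.

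The main obstacle is that $M_1$ is only almost Hermitian, so the direct calculation leaves behind the term $F_*((\nabla^1_X J_1)Y)$, which superficially appears to demand a K\"{a}hler hypothesis on $M_1$ to eliminate. The key realization is that this residue should be treated as an automatic consequence rather than an input: Lemma~2.1 combined with the invariance properties of Lemma~3.1 forces it into $\operatorname{range} F_*\cap(\operatorname{range} F_*)^{\perp}$, so no assumption on $\nabla^1 J_1$ is needed.
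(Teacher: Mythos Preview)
Your argument is correct and is a careful realization of what the paper only sketches (``by using (\ref{eq:2.12}) and (\ref{eq:2.13})''): you expand via (\ref{eq:2.10}), apply the holomorphicity (\ref{eq:4.1}) and the K\"{a}hler condition (\ref{eq:2.13}) on $M_2$ to isolate the residue $F_*((\nabla^1_X J_1)Y)$, and then kill it using Lemma~2.1 together with the $J_2$-invariance of $(\operatorname{range}F_*)^{\perp}$, which is where (\ref{eq:2.12}) enters. The explicit observation that the possibly nonzero $(\nabla^1_X J_1)Y$ is annihilated by $F_*$ for horizontal $X,Y$ is a nice byproduct that the paper leaves implicit.
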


\begin{lemma} Let $F$ be a holomorphic Riemannian
map from an almost Hermitian  manifold $(M_1,g_1,J_1)$ to a
K\"{a}hler manifold $(M_2,g_2,J_2)$. Then we have
\begin{eqnarray}
g_1(R^1(X,J_1X)J_1X,X)&=&g_2(R^2(F_*X,J_2F_*X)J_2F_*X,F_*X)\nonumber\\
&-&2\parallel (\nabla F_*)(X,X)\parallel^2 \label{eq:4.5}
\end{eqnarray}
for $X \in \Gamma((ker F_*)^\perp)$.
\end{lemma}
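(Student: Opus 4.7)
The plan is to specialize the Gauss equation from Lemma 2.2 to the holomorphic sectional setting and then use Lemma 3.3 to simplify the second fundamental form terms. Concretely, I would substitute $Y = J_1 X$, $Z = J_1 X$, $T = X$ into
\begin{equation*}
g_2(R^2(F_*X,F_*Y)F_*Z,F_*T) = g_1(R^1(X,Y)Z,T) + g_2((\nabla F_*)(X,Z),(\nabla F_*)(Y,T)) - g_2((\nabla F_*)(Y,Z),(\nabla F_*)(X,T)).
\end{equation*}
By Lemma 3.2, $(\ker F_*)^\perp$ is $J_1$-invariant, so all four arguments stay horizontal and the Gauss equation applies. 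Since $F$ is holomorphic, $F_* J_1 X = J_2 F_* X$, so the left-hand side already has the desired holomorphic-sectional form $g_2(R^2(F_*X, J_2 F_*X)J_2 F_*X, F_*X)$.

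Next I would rewrite the two second-fundamental-form inner products using Lemma 3.3. The identity $(\nabla F_*)(X, J_1 Y) = J_2(\nabla F_*)(X,Y)$ gives $(\nabla F_*)(X, J_1 X) = J_2 (\nabla F_*)(X,X)$, and by symmetry also $(\nabla F_*)(J_1 X, X) = J_2 (\nabla F_*)(X,X)$. Iterating, $(\nabla F_*)(J_1 X, J_1 X) = J_2 (\nabla F_*)(X, J_1 X) = J_2^2 (\nabla F_*)(X,X) = -(\nabla F_*)(X,X)$. Using that $(\operatorname{range} F_*)^\perp$ is $J_2$-invariant and that $g_2$ is Hermitian with respect to $J_2$, the two Gauss-equation correction terms become
\begin{equation*}
g_2((\nabla F_*)(X,J_1 X),(\nabla F_*)(J_1 X,X)) = \|(\nabla F_*)(X,X)\|^2,
\end{equation*}
\begin{equation*}
g_2((\nabla F_*)(J_1 X,J_1 X),(\nabla F_*)(X,X)) = -\|(\nabla F_*)(X,X)\|^2.
\end{equation*}
Combining these, the sum of the two correction terms contributes $+2\|(\nabla F_*)(X,X)\|^2$ on the right-hand side, and rearranging produces exactly the stated formula.

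I expect no serious obstacle; the proof is essentially a substitution followed by a bookkeeping step. The one subtlety worth checking carefully is that the second fundamental form values $(\nabla F_*)(X,X)$, which by Lemma 2.1 lie in $(\operatorname{range} F_*)^\perp$, are genuinely acted on by $J_2$. This is precisely the remark made just after Lemma 3.2 that $(\operatorname{range} F_*)^\perp$ is $J_2$-invariant; once that is invoked, the Hermitian property $g_2(J_2 U, J_2 V) = g_2(U,V)$ applies and the calculation goes through cleanly.
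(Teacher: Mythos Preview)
Your proposal is correct and follows exactly the route the paper takes: substitute $Y=Z=J_1X$, $T=X$ into the Gauss equation (Lemma~2.2), use the holomorphicity relation $F_*J_1=J_2F_*$, and simplify the second-fundamental-form terms via $(\nabla F_*)(X,J_1Y)=J_2(\nabla F_*)(X,Y)$. Note only that your references to ``Lemma~3.2'' and ``Lemma~3.3'' are off by one relative to the paper's numbering (they are Lemma~3.1 and Lemma~3.2 there), but the content you invoke is the right one.
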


\begin{proof} Putting $Y=J_1X$, $Z=J_1X$ and $T=X$ in
(\ref{eq:3.7}) and by using (\ref{eq:4.1}) and (\ref{eq:4.4}) we obtain (\ref{eq:4.5}).
\end{proof}

As a result of Lemma ~3.3, we have the following result for the
leaves of $(ker F_*)^\perp$.

\begin{theorem}Let $F$ be a holomorphic Riemannian
map from an almost Hermitian manifold $(M_1,g_1,J_1)$ to a complex
space form $(M_2(c),g_2,J_2)$ of constant holomorphic sectional
curvature $c$ such that $(ker F_*)^\perp$ is integrable. Then the
integral manifold of $(ker F_*)^\perp$ is a complex space form
$M'(c)$ if and only if $(\nabla F_*)(X,X)=0$ for $X \in \Gamma((ker
F_*)^\perp)$.
\end{theorem}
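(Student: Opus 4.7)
The plan is to apply Lemma~3.3 directly and use the constant holomorphic sectional curvature hypothesis on $M_2$ to rewrite the ambient curvature term, so that the equivalence drops out by comparison with the intrinsic holomorphic sectional curvature of the integral manifold.

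First, by Lemma~3.1 the distribution $(\ker F_*)^\perp$ is $J_1$-invariant; together with the integrability assumption, this implies that each integral manifold $M'$ of $(\ker F_*)^\perp$ carries an induced almost Hermitian structure $(g_1|_{M'}, J_1|_{M'})$, so the claim that $M'$ is a complex space form of constant holomorphic sectional curvature $c$ is well-posed.

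Next, for any horizontal vector $X$, Lemma~3.3 yields
$$g_1(R^1(X,J_1X)J_1X, X) = g_2(R^2(F_*X, J_2F_*X)J_2F_*X, F_*X) - 2\parallel(\nabla F_*)(X,X)\parallel^2.$$
Because $M_2(c)$ has constant holomorphic sectional curvature $c$, the first term on the right equals $c\parallel F_*X\parallel^4$, and the Riemannian map property gives $\parallel F_*X\parallel = \parallel X\parallel$ for horizontal $X$. Therefore
$$g_1(R^1(X,J_1X)J_1X, X) = c\parallel X\parallel^4 - 2\parallel(\nabla F_*)(X,X)\parallel^2.$$

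The equivalence is now immediate: the holomorphic sectional curvature of $M'$ on the plane spanned by $\{X, J_1X\}$ equals $c$ for every horizontal $X$ if and only if $\parallel(\nabla F_*)(X,X)\parallel^2 \equiv 0$, that is, $(\nabla F_*)(X,X) = 0$. The one step that requires care, and the main potential obstacle, is the identification of the left-hand side of the displayed identity with the intrinsic holomorphic sectional curvature of the leaf $M'$: in general $R^1$ differs from the Levi-Civita curvature of $(M', g_1|_{M'})$ by Gauss-equation terms involving the second fundamental form of $M'$ in $M_1$. I would verify, using the $J_1$-invariance of the leaf together with the holomorphicity and Riemannian map properties of $F$, that these correction terms cancel on the holomorphic two-plane $\{X, J_1X\}$, so that the identity directly encodes the holomorphic sectional curvature of $M'$ and the stated equivalence follows.
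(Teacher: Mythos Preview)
Your overall strategy coincides with the paper's: the paper offers no argument beyond declaring the theorem ``a result of Lemma~3.3,'' and you carry out exactly that plan, substituting the constant holomorphic sectional curvature $c$ of $M_2$ into the identity of Lemma~3.3 and reading off the equivalence.

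The place where you go beyond the paper is precisely the place where a genuine gap remains. You correctly observe that the left-hand side of Lemma~3.3 involves $R^1$, the curvature of the \emph{ambient} manifold $M_1$, and not the intrinsic curvature of the leaf $M'$; the paper makes this identification tacitly. Your proposed fix, however, does not go through. The Gauss correction for $M'\subset M_1$ involves the second fundamental form $h$ of the leaf, which takes values in $\ker F_*$. Neither the holomorphicity of $F$ nor the Riemannian-map condition constrains $h$: $F_*$ annihilates $\ker F_*$, so every hypothesis on $F$ is blind to the normal geometry of the leaf inside $M_1$. The $J_1$-invariance of the leaf is not enough either, since $M_1$ is only almost Hermitian; and even when $M_1$ is K\"{a}hler, the classical inequality between holomorphic sectional curvatures of a complex submanifold and its ambient shows that the correction on the plane $\{X,J_1X\}$ vanishes only when $h(X,X)=0$, i.e.\ essentially when the leaf is totally geodesic. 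Hence the obstacle you flag is real, is shared by the paper's own (omitted) argument, and cannot be removed from the hypotheses as stated; one needs either an additional assumption on the leaves (e.g.\ total geodesicity) or a reading of ``$M'$ is a complex space form $M'(c)$'' in terms of $R^1$ rather than the intrinsic curvature.
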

\noindent{\bf Concluding Remarks.~}It is known that the complex techniques in relativity have been very effective tools for understanding spacetime geometry \cite{Lerner}. Indeed, complex manifolds have two interesting classes of K\"{a}hler manifolds. One is Calabi-Yau manifolds which have their applications in superstring theory \cite{Candelas}. The other one is Teichmuler spaces applicable to relativity \cite{Tromba}. It is also important to note that CR-structures have been extensively used in spacetime geometry of relativity \cite{Penrose}. For complex methods in general relativity,  see:\cite{Esposito}.\\

In \cite{Fischer}, Fischer proposed an approach to build a quantum model and he pointed
out the success of such a program of building a quantum model of nature using
Riemannian maps would provide an interesting relationship between Riemannian
maps, harmonic maps and Lagrangian field theory on the mathematical side, and
Maxwell's equation, Schr\"{o}dinger's equation and their proposed generalization on
the physical side. It is also important to
note that Riemannian maps satisfy the Eikonal equation which is a bridge between
geometric optics and physical optics. For Riemannian maps and their applications
in spacetime geometry, see \cite{Garcia-Rio-Kupeli}. As a unification of Riemannian maps and complex geometry, holomorphic Riemannian maps may have their applications in mathematical physics and physical optics.

\end{document}